\newtheorem{definition}{Definition}
\newtheorem{proposition}{Proposition}
\newcommand\id{\mathrm{id}} 
\newcommand\ps{\mathcal P} 
\newcommand\diset[2]{\binom{#1}{#2}} 
\renewcommand\P{\mathbf P} 
\newcommand\C{\mathbf C} 
\newcommand\B{\mathbf B} 
\newcommand\R{\mathbb R} 
\newcommand\Learn{\mathbf{Learn}} 
\newcommand\OG{\mathbf{Game}} 
\title{From open learners to open games}
\author{Jules Hedges}
\date{}
\begin{document}

\maketitle

\begin{abstract}
	The categories of open learners (due to Fong, Spivak and Tuy\'eras) and open games (due to the present author, Ghani, Winschel and Zahn) bear a very striking and unexpected similarity. The purpose of this short note is to prove that there is a faithful symmetric monoidal functor from the former to the latter, which means that any supervised neural network (without feedback or other complicating features) can be seen as an open game in a canonical way.  Roughly, each parameter is controlled by a different player, and the game's best response relation encodes the dynamics of gradient descent. We suggest paths for further work exploiting the link.
\end{abstract}

\section{Introduction}

We give an overview of the motivation for open learners and open games, but for the formal definitions and more context we refer the reader to \cite{fong-spivak-tuyeras-backprop-as-functor} for open learners and \cite{hedges_etal_compositional_game_theory} for open games.
We will use the notation of both of those papers.
More detail on open games can be found in \cite{hedges_towards_compositional_game_theory,hedges_morphisms_open_games}.

An open learner $X \to Y$ is a supervised learning system that learns a function $X \to Y$ by being presented with a sequence of pairs $(x, y)$.
This function is determined by a set of parameters, which are updated each time a new $(x, y)$ pair is presented.
Open learners are very general, but contain neural networks (at least, those consisting simply of a finite sequence of layers) as a special case by explicitly encoding backpropagation and gradient descent.
Open learners form the morphisms of a symmetric monoidal category $\Learn$ whose objects are sets, where categorical composition and monoidal product correspond to sequential and parallel composition of learning systems, which includes end-to-end and side-by-side composition of neural networks as a special case.
The resulting compositionality of the backpropagation and gradient descent semantics is the main motivation for studying open learners.

An open game $\diset X S \to \diset Y R$ is a fragment of a non-cooperative game that starts in an initial state $X$ that players can (potentially) observe, and with their choices determining a final state in $Y$.
Players act rationally (in the sense of classical game theory) in order to optimise a value in $R$ (which is typically $\mathbb R^n$ with each player attempting to maximise one coordinate), and a `co-value' in $S$ is passed back to act as the value for players in the past.
Instead of parameters we have strategies for players, and instead of updating parameters we have a `best response' relation on strategy profiles.
In general this cannot be viewed as a dynamics (in the sense that iterating it is not an interesting thing to do), but we are interested in fixpoints of it, which are Nash equilibria, or mutually non-self-defeating choices of strategies.
Open games form the morphisms of a symmetric monoidal category $\OG$ whose objects are pairs of sets, where categorical composition and monoidal product correspond to sequential and parallel play.

Whereas parameter updating of an open learner takes place in the `context' of a pair $(x, y)$, best response of an open game takes place in the context of a pair $(x, k)$ where $x \in X$ is the initial state and $k : Y \to R$ is the `continuation'.
This $k$ contains strictly more information than a single point $y$; specifically it encodes \emph{counterfactuals}, saying what the payoffs would be if some other choices had been made.
From a categorical point of view, this prevents our functor from being full.
We suggest that this is the sole significant distinction between (elementary) machine learning and (elementary) game theory.

\paragraph{Acknowledgements.} The proof of functorality was sketched by the author and David Spivak around a year before this note was written.
The author would also like to thank too many people to name for discussions about this topic, after spending a year telling anybody who would listen about it, with special mention for Neil Ghani, Mike Johnson and Eliana Lorch.

\section{From open learners to open games}

Recall the definition of an open learner $X \to Y$ \cite[definition 2.1]{fong-spivak-tuyeras-backprop-as-functor} and the definition of an open game $\diset X S \to \diset Y R$  \cite[definition 3.1]{hedges_etal_compositional_game_theory}.

\begin{definition}
Given an open learner $A = (P_A, I_A, U_A, r_A) : X \to Y$, we define an open game $F (A) : \diset X X \to \diset Y Y$ as follows:
\begin{itemize}
	\item The set of strategy profiles is $\Sigma_{F (A)} = P_A$
	\item The play function is $\P_{F (A)} = I_A$
	\item The coplay function is $\C_{F (A)} = r_A$
	\item The best response relation $\B_{F (A)} (h, k) = \{ (p, U_A (p, h, k (I_A (p, h)))) \mid p \in P_A \}$
\end{itemize}
\end{definition}

Notice that there is an exact correspondence between parameters/strategies, implementation/play and request/coplay.
The only nontrivial part of this definition is converting update to best response, which also uses implementation.
This means that throughout this note there is nothing to prove except for the final case, which nevertheless takes some work.

Notice in particular that the relation $B_{F (A)} (h, k)$ is always a functional relation, i.e. every $p \in P_A$ is related to exactly one thing, namely $U_A (p, h, k (I_A (p, h)))$.

In both $\Learn$ and $\OG$, morphisms are formally defined as equivalence classes.\footnote{Both open learners and open games naturally form monoidal bicategories with the monoidal categories being the result of quotienting by invertible 2-cells. Both \cite{fong-spivak-tuyeras-backprop-as-functor} and \cite{hedges_etal_compositional_game_theory} mention this explicitly, and for open games the details are worked out in \cite{hedges_morphisms_open_games}.}
Recall the definitions of equivalence of open learners \cite[section 2]{fong-spivak-tuyeras-backprop-as-functor} and equivalence of open games \cite[definition 5.2]{hedges_etal_compositional_game_theory}.
There is a small error in the definition of equivalence of open learners as currently written in \cite{fong-spivak-tuyeras-backprop-as-functor}, which does not type-check.
The correct equation for update should be
\[ U' (f (p), a, b) = f (U (p, a, b)) \]

\begin{proposition}
	$F$ is well-defined on equivalence classes.
\end{proposition}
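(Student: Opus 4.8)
The plan is to take the bijection underlying an equivalence of open learners and show that it directly witnesses an equivalence of the resulting open games. Suppose $A = (P_A, I_A, U_A, r_A)$ and $A' = (P_{A'}, I_{A'}, U_{A'}, r_{A'})$ are equivalent open learners $X \to Y$, witnessed by a bijection $f : P_A \to P_{A'}$ satisfying $I_{A'}(f(p), a) = I_A(p, a)$, the corrected update equation $U_{A'}(f(p), a, b) = f(U_A(p, a, b))$, and $r_{A'}(f(p), a, b) = r_A(p, a, b)$. Since strategy profiles are literally parameters, $f$ is also a bijection $\Sigma_{F(A)} \to \Sigma_{F(A')}$, and this is the bijection I propose to use as an equivalence $F(A) \simeq F(A')$.

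First I would dispatch the play and coplay conditions, which (as the paper already signals) are immediate. Because $\P_{F(A)} = I_A$ and $\C_{F(A)} = r_A$, the equivalence conditions on play and coplay are exactly the implementation and request equations above, so they hold by hypothesis with no further work.

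The only real content is the best response condition, which demands that $(p, q) \in \B_{F(A)}(h, k)$ if and only if $(f(p), f(q)) \in \B_{F(A')}(h, k)$. Here I would lean on the observation, recorded just before the proposition, that each $\B_{F(A)}(h,k)$ is functional: $(p,q)$ lies in it exactly when $q = U_A(p, h, k(I_A(p, h)))$, and likewise $(f(p), f(q)) \in \B_{F(A')}(h,k)$ exactly when $f(q) = U_{A'}(f(p), h, k(I_{A'}(f(p), h)))$. The task therefore reduces to verifying the single identity $U_{A'}(f(p), h, k(I_{A'}(f(p), h))) = f(U_A(p, h, k(I_A(p, h))))$, after which the biconditional follows from $f$ being a bijection, so that $\B_{F(A')}(h,k)$ is exactly the image of $\B_{F(A)}(h,k)$ under $f \times f$.

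The main obstacle — the one place where the argument is not purely formal — is that the two invariance equations must be applied in the right order and do not act independently. I would first rewrite the innermost term using implementation compatibility, $I_{A'}(f(p), h) = I_A(p, h)$, so that the continuation $k$ receives the same input on both sides; only then is the outer update equation $U_{A'}(f(p), a, b) = f(U_A(p, a, b))$ applicable, since it requires the arguments $a$ and $b$ to be identical on the two sides. Chaining these two rewrites yields the desired identity, and this is precisely the point at which the corrected update equation, rather than the type-incorrect one noted above, is essential. No further cases arise, so well-definedness follows.
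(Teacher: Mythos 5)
Your proof is correct and follows essentially the same route as the paper's: transport the bijection witnessing the learner equivalence, note play/coplay are immediate, and verify the best-response condition by rewriting first with the implementation equation and then with the (corrected) update equation, using bijectivity to get the biconditional. Your explicit remark about the order of the two rewrites is a slightly more careful spelling-out of a step the paper performs in a single chained equality, but the argument is the same.
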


\begin{proof}
	Let $A \sim B : X \to Y$ be equivalent games, so there is a bijection $i : P_A \to P_B$ that respects implementation, update and request.
	Immediately $i : \Sigma_{F (P_A)} \to \Sigma_{F (P_B)}$ respects play and coplay, so we need only check that it respects best response.
	For $h : X$ and $k : Y \to Y$, we have that $(p, p') \in \B_{F (A)} (h, k)$ iff
	\[ p' = U_A (p, h, k (I_A (p, h))) \]
	iff
	\[ i (p') = i (U_A (p, h, k (I_A (p, h)))) = U_B (i (p), h, k (I_B (i (p), h))) \]
	(since $i$ is a bijection), iff $(i (p), i (p')) \in \B_{F (B)} (h, k)$.
	Hence $F (A) \sim F (B)$.
\end{proof}

\section{Categorical structure}

Recall the definition of identity morphisms in $\Learn$ \cite[section 2]{fong-spivak-tuyeras-backprop-as-functor} and $\OG$ \cite[definition 5.5]{hedges_etal_compositional_game_theory}.

\begin{proposition}
	$F$ takes identities in $\Learn$ to identities in $\OG$.
\end{proposition}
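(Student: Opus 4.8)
The plan is to make everything explicit and compare the two open games componentwise; because both the identity learner and the identity game have a one-element parameter/strategy set, essentially nothing will need to be computed. First I would unfold the identity learner $\id_X = (P, I, U, r) : X \to X$ of $\Learn$: its parameter set is a singleton $P = \{*\}$, its implementation is the identity $I(*, x) = x$, its update is necessarily constant $U(*, x, x') = *$, and its request passes the target back, $r(*, x, x') = x'$. Since $\id_X : X \to X$, the definition of $F$ produces an open game $F(\id_X) : \diset X X \to \diset X X$.

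Next I would read off the four pieces of data of $F(\id_X)$ directly from the definition: the strategy set is $\Sigma_{F(\id_X)} = \{*\}$, the play function is $\P_{F(\id_X)}(*, x) = I(*, x) = x$, the coplay function is $\C_{F(\id_X)}(*, x, x') = r(*, x, x') = x'$, and the best response is $\B_{F(\id_X)}(h, k) = \{(*, U(*, h, k(I(*, h))))\}$. I would then recall the identity open game on $\diset X X$, whose strategy set is the singleton $\{*\}$, whose play and coplay are the respective identities $(*, x) \mapsto x$ and $(*, x, x') \mapsto x'$, and whose best response is the constant relation $\B(h, k) = \{(*, *)\}$. The first three components agree on the nose, by exactly the correspondences between parameters and strategies, implementation and play, and request and coplay already noted in the remark following the definition of $F$.

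The only component requiring an argument is the best response, and it is immediate: because $P$ is a singleton, the update of $\id_X$ is forced to land in $\{*\}$, so $U(*, h, k(I(*, h))) = *$ for every context $(h, k)$, whence $\B_{F(\id_X)}(h, k) = \{(*, *)\}$, which is exactly the best response relation of the identity game. I expect the only (minor) obstacle to be bookkeeping rather than mathematics: getting the identity learner's request and update correct and keeping the types of the context $(h, k)$, with $h : X$ and $k : X \to X$, straight. After that the two open games coincide on representatives, so in particular $F(\id_X)$ is equivalent to the identity of $\diset X X$ in $\OG$; as anticipated in the earlier remark, the genuine work in establishing functoriality is deferred to the case of composition.
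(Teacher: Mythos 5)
Your proposal is correct and follows essentially the same route as the paper's proof: the first three components match definitionally, and the best response computation reduces to $\B_{F(\id_X)}(h,k) = \{(*, U_{\id_X}(*, h, k(h)))\} = \{(*,*)\}$ because the update is forced into the singleton parameter set. Your version merely spells out the identity learner's data and the componentwise comparison more explicitly than the paper does.
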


\begin{proof}
	Let $\id_X : X \to X$ be an identity in $\Learn$.
	Then the best response relation $\B_{F (\id_X)} : X \times (X \to X) \to \ps (1 \times 1)$ is
	\[ \B_{F (\id_X)} (h, k) = \{ (*, U_{\id_X} (*, h, k (h))) \} = \{ (*, *) \} = \B_{\id_{\diset X X}} (h, k) \]
	as required.
\end{proof}

Recall the definition of categorical composition of open learners \cite[section 2]{fong-spivak-tuyeras-backprop-as-functor} and open games \cite[definition 5.1]{hedges_etal_compositional_game_theory}.

\begin{proposition}
	$F$ defines a functor $F : \Learn \to \OG$.
\end{proposition}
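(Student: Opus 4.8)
The plan is to use the three preceding results to reduce the claim to a single computation. Since $F$ acts correctly on objects by sending a set $X$ to the pair $\diset X X$, is well-defined on equivalence classes (Proposition~1), and preserves identities (Proposition~2), all that remains is to show that $F$ preserves composition: for composable open learners $A : X \to Y$ and $B : Y \to Z$ we must verify $F (B \circ A) \sim F (B) \circ F (A)$. I would take such $A$ and $B$ and compare the four pieces of data of $F (B \circ A)$ and $F (B) \circ F (A)$ one at a time. Both open games have strategy-profile set given by the product of $P_A$ and $P_B$, so the evident bijection (a swap, if the two source papers order the composite parameter set differently) is the candidate equivalence, and the bulk of the work is checking that it respects the remaining structure; note that equivalence, rather than strict equality, is all that is needed since $F$ descends to equivalence classes.

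Three of the four checks should be immediate, because the play and coplay functions of a composite open game are assembled from the component plays and coplays by exactly the same nested substitution that defines the implementation and request of a composite open learner. Concretely, the play of $F (B) \circ F (A)$ sends $((p, q), x)$ to $\P_{F (B)} (q, \P_{F (A)} (p, x)) = I_B (q, I_A (p, x))$, which is $\P_{F (B \circ A)} ((p, q), x)$; and the coplay unfolds on both sides to $r_A (p, x, r_B (q, I_A (p, x), z))$. So play matches implementation and coplay matches request with no real computation.

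The one substantial step is to match the best response relation of $F (B) \circ F (A)$ against the best response of $F (B \circ A)$, which by Definition~1 is the functional relation given by $U_{B \circ A}$. Unwinding the definition of composite best response in $\OG$, a pair $((p, q), (p', q'))$ lies in $\B_{F (B) \circ F (A)} (h, k)$ precisely when $(q, q') \in \B_{F (B)} (I_A (p, h), k)$ and $(p, p') \in \B_{F (A)} (h, k')$, where the continuation seen by the first component is $k' (y) = \C_{F (B)} (q, y, k (\P_{F (B)} (q, y))) = r_B (q, y, k (I_B (q, y)))$. Since each component best response is functional (as noted after Definition~1), I would evaluate each continuation at the point actually played, namely $I_A (p, h)$ for the first component and $I_B (q, I_A (p, h))$ for the second, and read off $q' = U_B (q, I_A (p, h), z)$ and $p' = U_A (p, h, r_B (q, I_A (p, h), z))$ with $z = k (I_B (q, I_A (p, h)))$. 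These are exactly the two coordinates of $U_{B \circ A} ((p, q), h, z)$, so the relations coincide.

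The main obstacle here is conceptual rather than computational, which is presumably why this case `takes some work'. The game-theoretic continuation $k : Z \to Z$ carries counterfactual information at every point of $Z$, and the composite best response threads it backwards through $F (B)$'s coplay to build the continuation $k'$ presented to $F (A)$; by contrast a learner only ever consults its continuation at the single point it actually produces. The content of the proof is that this discards all the counterfactual data, so the backward threading collapses to precisely the request-then-update cascade of learner composition, and getting the bookkeeping of which point each continuation is evaluated at exactly right is where the care is needed. Once that is pinned down the identification is forced; I would then confirm that the swap bijection also respects play and coplay (immediate from the symmetry of the above), completing the verification that $F (B \circ A) \sim F (B) \circ F (A)$ and hence that $F$ is a functor.
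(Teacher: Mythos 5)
Your proposal is correct and follows essentially the same route as the paper's own proof: both reduce the claim to preservation of composition, dismiss strategy sets, play and coplay as trivially matching, and then unwind the composite best response with the continuation $k'(y) = r_B(q, y, k(I_B(q, y)))$ threaded back through $F(B)$'s coplay, evaluating at the actually-played points to recover exactly the two coordinates of $U_{B \circ A}$. The only cosmetic difference is that you hedge with an equivalence $F(B \circ A) \sim F(B) \circ F(A)$ (and a possible swap bijection), whereas the paper's computation gives strict equality with the identity on $P_A \times P_B$.
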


\begin{proof}
	Take a composable pair of open learners, $A = (P_A, I_A, U_A, r_A) : X \to Y$ and $B = (P_B, I_B, U_B, r_B) : Y \to Z$.
	The open learner $B \circ A : X \to Z$ has update function
	\[ U_{B \circ A} : (P_A \times P_B) \times X \times Z \to P_A \times P_B \]
	\[ U_{B \circ A} ((p, q), x, z) = (U_A (p, x, r_B (q, I_A (p, x), z)), U_B (q, I_A (p, x), z)) \]
	The open game $F (B \circ A) : \diset X X \to \diset Z Z$ has best response relation
	\[ \B_{F (B \circ A)} : X \times (Z \to Z) \to \ps ((P_A \times P_B) \times (P_A \times P_B)) \]
	\begin{align*}
		&\B_{F (B \circ A)} (h, k) \\
		=\ &\{ ((p, q), U_{B \circ A} ((p, q), h, k (I_{B \circ A} ((p, q), h)))) \mid p \in P_A, q \in P_B \} \\
		=\ &\{ ((p, q), U_{B \circ A} ((p, q), h, k (I_B (q, I_A (p, h))))) \mid p \in P_A, q \in P_B \} \\
		=\ &\{ ((p, q), (U_A (p, h, r_B (q, I_A (p, h), k (I_B (q, I_A (p, h))))), \\
		&\hspace{43pt} U_B (q, I_A (p, h), k (I_B (q, I_A (p, h)))))) \mid p \in P_A, q \in P_B \}
	\end{align*}
	
	On the other hand, the open game $F (A) : \diset X X \to \diset Y Y$ has best response relation
	\[ \B_{F (A)} : X \times (Y \to Y) \to \ps (P_A \times P_A) \]
	\[ \B_{F (A)} (h, k) = \{ (p, U_A (p, h, k (I_A (p, h)))) \mid p \in P_A \} \]
	and the open game $F (B) : \diset Y Y \to \diset Z Z$ has best response relation
	\[ \B_{F (B)} : Y \times (Z \to Z) \to \ps (P_B \times P_B) \]
	\[ \B_{F (B)} (h, k) = \{ (q, U_B (q, h, k (I_B (q, h)))) \mid q \in P_B \} \]
	Putting these together, the open game $F (B) \circ F (A) : \diset X X \to \diset Z Z$ has best response relation
	\[ \B_{F (B) \circ F (A)} : X \times (Z \to Z) \to \ps ((P_A \times P_B) \times (P_A \times P_B)) \]
	\begin{align*}
		&\B_{F (B) \circ F(A)} (h, k) \\
		=\ &\{ ((p, q), (p', q')) \mid (p, p') \in \B_{F (A)} (h, k') \text{ and } (q, q') \in \B_{F (B)} (I_A (p, h), k) \}
	\end{align*}
	where $k' : Y \to Y$ is given by $k' (y) = r_B (q, y, k (I_B (q, y)))$.
	This expands to the set of $((p, q), (p', q'))$ where
	\[ p' = U_A (p, h, r_B (q, I_A (p, h), k (I_B (q, I_A (p, h))))) \]
	and
	\[ q' = U_B (q, I_A (p, h), k (I_B (q, I_A (p, h))))) \]
	Comparing to the above, we see that $\B_{F (B) \circ F (A)} (h, k) = \B_{F (B \circ A)} (h, k)$.
\end{proof}

\begin{proposition}
	$F$ is faithful.
\end{proposition}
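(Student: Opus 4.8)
The plan is to show that the action of $F$ on each hom-set is injective. Since morphisms in both categories are equivalence classes, this amounts to: if $A, B : X \to Y$ are open learners with $F(A) \sim F(B)$, then $A \sim B$. So suppose we are handed an equivalence of open games, i.e. a bijection $i : \Sigma_{F(A)} \to \Sigma_{F(B)}$, which is to say a bijection $i : P_A \to P_B$ respecting play, coplay and best response. Because the play and coplay of $F(A)$ are literally the implementation $I_A$ and request $r_A$ of $A$ (and likewise for $B$), the bijection $i$ automatically respects implementation and request. The entire content of the proof is therefore to show that $i$ also respects update, since that will exhibit $i$ as an equivalence of open learners.

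First I would unpack the hypothesis that $i$ respects best response. As noted after the definition, each $\B_{F(A)}(h, k)$ is a functional relation, so (exactly as in the proof that $F$ is well-defined) respecting best response is equivalent to the family of equations
\[ i(U_A(p, h, k(I_A(p, h)))) = U_B(i(p), h, k(I_B(i(p), h))) \]
for all $p \in P_A$, all $h \in X$ and all continuations $k : Y \to Y$. Since $i$ respects play we have $I_B(i(p), h) = I_A(p, h)$, so the right-hand side becomes $U_B(i(p), h, k(I_A(p, h)))$.

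The one step that requires an idea rather than bookkeeping is to recover the genuine, pointwise update equation from this family. The obstacle is that best response only ever feeds the update functions a third argument of the constrained shape $k(I_A(p, h))$, whereas the update equation of \cite{fong-spivak-tuyeras-backprop-as-functor} must hold for every element of $Y$ independently. This is overcome by exploiting the freedom in choosing the continuation $k$: given arbitrary $a \in X$ and $b \in Y$, set $h = a$ and take $k : Y \to Y$ to be the constant function at $b$, so that $k(I_A(p, a)) = b$ no matter what $I_A(p, a)$ happens to be. Substituting into the equation above yields precisely
\[ U_B(i(p), a, b) = i(U_A(p, a, b)), \]
which is the update condition (in the corrected form given earlier). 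The degenerate cases in which $X$ or $Y$ is empty make this condition vacuous, so they require no separate argument.

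With implementation, request and update all respected by the bijection $i$, it is an equivalence of open learners, whence $A \sim B$. Thus $F$ is injective on each hom-set, i.e. faithful. I expect the constant-continuation trick to be the only real content here; everything else is unwinding definitions, in keeping with the observation running through this note that the sole non-trivial ingredient is the passage between update and best response.
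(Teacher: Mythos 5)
Your proposal is correct and matches the paper's own proof in every essential respect: both reduce the problem to the update condition, exploit the functionality of $\B_{F(B)}(h,k)$ to turn best-response preservation into an equation, and use the constant continuation $k(y') = b$ to decouple the third argument of $U$ from the implementation. The only cosmetic differences are your explicit use of $I_B(i(p),h) = I_A(p,h)$ (which the constant $k$ renders unnecessary) and your remark on empty $X$ or $Y$.
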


\begin{proof}
	Let $A, B : X \to Y$ be open learners.
	We show that if there is an equivalence of open games $F (A) \sim F (B)$ then there is an equivalence of open learners $A \sim B$.
	Suppose we have such a bijection $i : P_A \to P_B$.
	It immediately respects implementation and request.
	For any $p \in P_A$, $x \in X$ and $y \in Y$, define $k : Y \to Y$ by $k (y') = y$. Then
	\[ (p, U_A (p, x, y)) = (p, U_A (p, x, k (I_A (p, x)))) \in \B_{F (A)} (x, k) \]
	Since $F (A) \sim F (B)$ it follows that $(i (p), i (U_A (p, x, y))) \in \B_{F (B)} (x, k)$.
	We also have
	\[ (i (p), U_B (i (p), x, y)) = (i (p), U_B (i (p), x, k (I_B (i (p), x)))) \in \B_{F (B)} (x, k) \]
	Since $\B_{F (B)} (x, k)$ is a functional relation, we deduce that $i (U_A (p, x,  y)) = U_B (i (p), x, y)$.
	This proves that $A \sim B$.
\end{proof}

\section{Monoidal structure}

\begin{proposition}
	For any open learners $A, B$, $F (A \otimes B) = F (A) \otimes F (B)$.
\end{proposition}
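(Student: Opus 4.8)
The plan is to follow the same template established for categorical composition: recall the two monoidal products, observe that the strategy sets, play and coplay match on the nose by their component-wise definitions, and reduce the whole proposition to comparing the two best response relations. First I would recall that in $\Learn$ the product $A \otimes B : X \times X' \to Y \times Y'$ of $A = (P_A, I_A, U_A, r_A)$ and $B = (P_B, I_B, U_B, r_B)$ runs the two systems in parallel with no interaction, so that $I_{A \otimes B} ((p, q), (x, x')) = (I_A (p, x), I_B (q, x'))$ and update and request both act component-wise. On the $\OG$ side, $F (A) \otimes F (B) : \diset{X \times X'}{X \times X'} \to \diset{Y \times Y'}{Y \times Y'}$ also has strategy set $P_A \times P_B$, play $I_{A \otimes B}$ and coplay $r_{A \otimes B}$, so those three components agree immediately and only the best response needs work.

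The main computation is to expand the two best response relations at an arbitrary context $((h, h'), k)$ with $k : Y \times Y' \to Y \times Y'$. For $F (A \otimes B)$, substituting the component-wise update and implementation into the definition of $F$ shows that $((p, q), (p', q'))$ lies in $\B_{F (A \otimes B)} ((h, h'), k)$ exactly when
\[ p' = U_A (p, h, \pi_1 k (I_A (p, h), I_B (q, h'))) \quad\text{and}\quad q' = U_B (q, h', \pi_2 k (I_A (p, h), I_B (q, h'))). \]
For $F (A) \otimes F (B)$ I would write out the two derived continuations that the open game tensor feeds to the factors: the $A$-component sees $k_A (y) = \pi_1 k (y, I_B (q, h'))$ and the $B$-component sees $k_B (y') = \pi_2 k (I_A (p, h), y')$, each player optimising against the other's play. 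Plugging these into the best response relations of $F (A)$ and $F (B)$ and evaluating $k_A$ at $I_A (p, h)$ and $k_B$ at $I_B (q, h')$ yields precisely the same two equations, so the relations coincide.

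The hard part, such as it is, will be bookkeeping around the split continuations: I must check that the point where each player ``sees'' the other's play in the open game tensor is exactly the point where $I_{A \otimes B}$ evaluates the opposite component, and that the projections $\pi_1, \pi_2$ land on the matching factors of $Y \times Y'$. Once $k_A$ and $k_B$ are written out correctly the two relations are visibly identical. As with composition, all the genuine content sits in converting update into best response, and because the product of open learners is non-interacting this conversion stays routine.
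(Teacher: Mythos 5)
Your proposal is correct and follows essentially the same route as the paper's proof: reduce everything to the best response relations, expand $\B_{F(A \otimes B)}$ using the component-wise update of the learner tensor, and match it against $\B_{F(A) \otimes F(B)}$ via the two derived continuations (your $k_A, k_B$ are exactly the paper's $k_1, k_2$, with the other factor's implementation evaluated at the current strategy profile). The one bookkeeping point you flag --- that each split continuation plugs in the other component's play at precisely the argument where $I_{A \otimes B}$ would evaluate it --- is indeed where the paper's calculation does its work, and your equations resolve it the same way.
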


\begin{proof}
	Suppose that $A = X \to Y$ and $B : W \to Z$.
	Again, it is only necessary to check update/best response.
	
	The update function of $A \otimes B$ is
	\[ U_{A \otimes B} : (P_A \times P_B) \times (X \times W) \times (Y \times Z) \to P_A \times P_B \]
	\[ U_{A \otimes B} ((p, q), (x, w), (y, z)) = (U_A (p, x, y), U_B (q, w, z)) \]
	Then the best response relation of $F (A \otimes B)$ is
	\[ \B_{F (A \otimes B)} : (X \times W) \times (Y \times Z \to Y \times Z) \to \ps ((P_A \times P_B) \times (P_A \times P_B)) \]
	\begin{align*}
		\B_{F (A \otimes B)} ((x, w), k) =\ &\{ ((p, q), (U_A (p, x, y), U_B (q, w, z))) \\
		&\mid p \in P_A, q \in P_B, (y, z) = k (I_A (p, x), I_B (q, w)) \}
	\end{align*}
	
	On the other hand, $F (A) : \diset X X \to \diset Y Y$ has best response relation
	\[ \B_{F (A)} : X \times (Y \to Y) \to \ps (P_A \times P_A) \]
	\[ \B_{F (A)} (x, k) = \{ (p, U_A (p, x, k (I_A (p, x)))) \mid p \in P_A \} \]
	and $F (B) : \diset W W \to \diset Z Z$ has best response relation
	\[ \B_{F (B)} : W \times (Z \to Z) \to (P_B \to \ps (P_B)) \]
	\[ \B_{F (B)} (w, k) = \{ (q, U_B (q, w, k (I_B (q, w)))) \mid q \in P_B \} \]
	Then $F (A) \otimes F (B) : \diset{X \times W}{X \times W} \to \diset{Y \times Z}{Y \times Z}$ has best response function
	\[ \B_{F (A) \otimes F (B)} : (X \times W) \times (Y \times Z \to Y \times Z) \to (P_A \times P_B \to \ps (P_A \times P_B)) \]

	\[ \B_{F (A) \otimes F(B)} ((x, w), k) (\sigma, \tau) = \B_{F (A)} (x, k_1) (\sigma) \times \B_{F (B)} (w, k_2) (\tau) \]
	where $k_1 : Y \to Y$ is given by $k_1 (y) = \pi_1 (k (y, I_B (\tau, w)))$, and $k_2 : Z \to Z$ is given by $k_2 (z) = \pi_2 (k (I_A (\sigma, x), z))$.
	This expands to
	\begin{align*}
		&\{ U_A (\sigma, x, k_1 (I_A (\sigma, x))) \} \times \{ U_B (\tau, w, k_2 (I_B (\tau, w))) \} \\
		=\ &\{ U_A (\sigma, x, \pi_1 (k (I_A (\sigma, x), I_B (\tau, w)))) \} \times \{ U_B (\tau, w, \pi_2 (k (I_A (\sigma, x), I_B (\tau, w)))) \} \\
		=\ &\{ (U_A (\sigma, x, \pi_1 (k (I_A (\sigma, x), I_B (\tau, w)))), U_B (\tau, w, \pi_2 (k (I_A (\sigma, x), I_B (\tau, w))))) \}
	\end{align*}
	as required.
\end{proof}

\begin{proposition}
	$F$ is a strict symmetric monoidal functor.
\end{proposition}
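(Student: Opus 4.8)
The plan is to read the claim off from the propositions already established and then dispatch the coherence data that has not yet appeared. A strict symmetric monoidal functor must strictly preserve the monoidal product on objects, preserve the tensor of morphisms, preserve the monoidal unit, and send the associator, the unitors and the symmetry of $\Learn$ to those of $\OG$. On objects, $F(X \times W) = \diset{X \times W}{X \times W} = \diset X X \otimes \diset W W = F(X) \otimes F(W)$, so the product is preserved on the nose; preservation of the tensor of morphisms is exactly the preceding proposition; and the unit of $\Learn$ is the one-point set $1$, whose image $F(1) = \diset 1 1$ is the unit of $\OG$. Thus the only genuinely new content is preservation of the structural isomorphisms.

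I would handle the associator, the two unitors and the symmetry by a single argument, exactly parallel to the proof that $F$ preserves identities. Each of these structural open learners has singleton parameter set $P = 1$, implementation given by the corresponding bijection of $\Set$ (reassociation, the canonical isomorphisms $1 \times X \cong X \cong X \times 1$, or the swap $X \times W \to W \times X$), forced-trivial update, and a request built from the same $\Set$-level bijection. Applying $F$, the strategy set is again $1$, so $\B_{F(-)}$ is the constant relation $\{ (*, *) \}$ just as in the identity case, the play $\P_{F(-)}$ equals the implementation, and the coplay $\C_{F(-)}$ equals the request. It then remains to compare these against the corresponding structural open game in $\OG$, whose strategy set is likewise $1$, whose best response is $\{ (*, *) \}$, and whose play and coplay are the same $\Set$-level bijections.

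The step requiring the most care — the main obstacle — is matching each learner's request function with the coplay function of the corresponding structural game, since this is the one piece of data that is neither trivial nor simply equal to an implementation. Concretely I would unfold the definitions of symmetry, associativity and unit from \cite{fong-spivak-tuyeras-backprop-as-functor} and \cite{hedges_etal_compositional_game_theory} and verify, component by component on the co-value side, that $r$ and $\C$ are the same bijection: the swap on the co-values for the symmetry, reassociation for the associator, and the unit isomorphisms for the unitors. Once these identifications are in place, $F$ sends each structural learner to the corresponding structural game, and together with preservation of the product, the tensor of morphisms and the unit this establishes that $F$ is a strict symmetric monoidal functor.
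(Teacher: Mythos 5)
Your proposal is correct and follows essentially the same route as the paper, whose entire proof is the one-line observation that $F$ takes the structure morphisms of $\Learn$ to the structure morphisms of $\OG$; you have simply unfolded that observation in detail (strict preservation of $\otimes$ on objects and morphisms, of the unit, and the component-by-component matching of the trivial-parameter structural learners with the trivial-strategy structural games, including the request/coplay bijections).
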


\begin{proof}
	$F$ takes the structure morphisms of $\Learn$ to structure morphisms of $\OG$.
\end{proof}

In compositional game theory, particular emphasis is placed on a certain family of open games $\varepsilon_X : \diset X X \to I$ known as \emph{counits}.
Recall their definition \cite[definition 4.5]{hedges_etal_compositional_game_theory}.

\begin{proposition}
	Counits are in the image of $F$.
\end{proposition}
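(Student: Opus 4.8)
The plan is to exhibit, for each object $X$, a single open learner $A_X : X \to 1$ whose image under $F$ is exactly the counit $\varepsilon_X : \diset X X \to I$, where $I = \diset 1 1$ is the monoidal unit. First I would recall from \cite[definition 4.5]{hedges_etal_compositional_game_theory} that $\varepsilon_X$ has a single strategy profile, the trivial play function $\P_{\varepsilon_X}(*, x) = *$, the coplay function $\C_{\varepsilon_X}(*, x, *) = x$ that returns the observed state as the costate, and---since there is only one strategy---the constant best response relation $\B_{\varepsilon_X}(x, k) = \{(*, *)\}$.

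The guiding observation, already noted just after the definition of $F$, is that under $F$ the coplay function is literally the request function $r$. So to reproduce the state-returning coplay of $\varepsilon_X$ I need a learner whose request function relays its input state back unchanged. Concretely I would take $A_X = (1, I_{A_X}, U_{A_X}, r_{A_X}) : X \to 1$ with a single parameter, implementation and update functions $I_{A_X}(*, x) = *$ and $U_{A_X}(*, x, *) = *$ (both forced, since the output set is the singleton $1$), and request function $r_{A_X}(*, x, *) = x$. Informally this is the learning system that learns the unique function $X \to 1$ while faithfully passing its input back to the preceding layer.

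It then remains to compare $F(A_X)$ with $\varepsilon_X$ componentwise. Three of the four checks are immediate: the strategy sets agree, $\Sigma_{F(A_X)} = P_{A_X} = 1$, and by construction $\P_{F(A_X)} = I_{A_X}$ and $\C_{F(A_X)} = r_{A_X}$ coincide with the play and coplay of the counit. The only thing to compute is the best response, and using $I_{A_X}(*, x) = *$ we get
\[ \B_{F(A_X)}(x, k) = \{ (*, U_{A_X}(*, x, k(*))) \} = \{ (*, *) \} = \B_{\varepsilon_X}(x, k), \]
the middle equality holding because $U_{A_X}$ takes values in $1$. Hence $F(A_X) = \varepsilon_X$, so every counit lies in the image of $F$.

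I do not anticipate a serious obstacle: the singleton codomain $1$ forces implementation and update to be trivial, and the one piece of genuine content in the counit---its state-returning coplay---is captured exactly by choosing the request function to be the projection onto the state. The only point demanding a moment's care is reconciling my recollection of the counit's coplay with the definition in \cite{hedges_etal_compositional_game_theory}; once that matches the identity-like map $\C_{\varepsilon_X}(*, x, *) = x$, the verification is purely formal.
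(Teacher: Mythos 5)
Your proposal is correct and is essentially the paper's proof: the paper exhibits the same open learner $!_X : X \to 1$ with $P_{!_X} = 1$, trivial implementation and update, and request function $r_{!_X}(*, x, *) = x$, and observes that $F(!_X) = \varepsilon_X$. You merely spell out the componentwise verification (in particular the best-response computation $\B_{F(A_X)}(x,k) = \{(*,*)\}$) that the paper compresses into ``we see immediately''.
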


\begin{proof}
	For each set $X$ consider the open learner $!_X : X \to 1$ with set of parameters $P_{!_X} = 1$ and request function $r_{!_X} (*, x, *) = x$.
	(The implementation and update functions both have codomain $1$, and hence are trivial.)
	We see immediately that $F (!_X) = \varepsilon_X$.
\end{proof}

\section{Outlook}

We have presented a formal connection between machine learning and game theory, and as such, it suggests applications of each to the other.
Above all, the link is in need of attention from somebody knowledgeable about machine learning from an applied perspective.

After several pages of theory, we re-state in English what we have: a canonical way to view any (sufficiently simple) neural network (and more general learning algorithms) as a fragment of a game.
Each parameter to be learned acts as though it is controlled by one player, and the best response relation encodes parameter updating.
Note that for games in general the best response relation should not be thought of as a `dynamics', and iterating it will typically not converge to equilibrium, however games that result from learning algorithms do have an interesting best response dynamics.

\begin{enumerate}

\item What sort of games can arise from neural networks and other learning algorithms?
Can techniques from game theory become useful in their analysis?

\item A player observing a value $x$ and choosing a value $y$ in order to maximise a real number is represented by an open game of type $\diset X 1 \to \diset{Y}{\mathbb R}$.
These open games are `monolithic', i.e. they are taken as generators that can label boxes in string diagrams, and their definition involves the $\arg\max$ operator. (This is precisely the place that optimisation of real numbers enters compositional game theory.)
Is it possible to decompose players as a learning algorithm with $F$ applied, and then pre- and post-composed with some `plumbing' to give it the correct type?
For example, can we take a neural network with $m$ inputs and $n$ outputs, represented as an open learner $N : \R^m \to \R^n$, and then choose functions $f : X \to \R^m$, $g : \R^n \to Y$ and $h : \R^n \times \R \to \R^n$ (lifted to zero-player open games) such that
\begin{center} \begin{tikzpicture}
	\node (X) at (0, 1) {$X$};
	\node (f) [rectangle, minimum height=2cm, minimum width=1cm, draw] at (2, 1) {$f$};
	\node (N) [rectangle, minimum height=3cm, minimum width=1cm, draw] at (4, 0) {$F (N)$};
	\node (d1) [circle, scale=.5, fill=black, draw] at (2, -1) {};
	\node (d2) [circle, scale=.5, fill=black, draw] at (6, 1) {};
	\node (g) [rectangle, minimum height=2cm, minimum width=1cm, draw] at (8, 1) {$g$};
	\node (Y) at (10, 1) {$Y$};
	\node (h) [rectangle, minimum height=2cm, minimum width=1cm, draw] at (6, -1) {$h$};
	\node (R) at (10, -1) {$\R$};
	\node (dummy1) at (0, -.5) {}; \node (dummy2) at (0, -1.5) {};
	\draw [->] (X) to (f);
	\draw [->] (f) to node [above] {$\R^m$} (N.west |- f);
	\draw [->] (N.west |- d1) to node [below] {$\R^m$} (d1);
	\draw [->] (N.east |- d2) to node [above] {$\R^n$} (d2);
	\draw [->] (d2) to [out=45, in=180] node [above] {$\R^n$} (g);
	\draw [->] (g) to (Y);
	\draw [->] (d2) to [out=-45, in=0] node [right] {$\R^n$} (h.east |- dummy1);
	\draw [->] (R) to [out=180, in=0] (h.east |- dummy2);
	\draw [->] (h) to node [below] {$\R^n$} (N.east |- h);
\end{tikzpicture} \end{center}
is a useful representation of a player who can learn?

\item The author implemented in Haskell the Cournot duopoly open game from \cite[section 4.4]{hedges_etal_compositionality_string_diagrams_game_theory} but with the players' ``best response'' functions were co-opted to perform a single step of gradient descent (with a very naive numerical implementation) rather than $\arg\max$.
Iterating the resulting best response of the composite was found to converge rapidly to the market equilibrium.
This was inspired by the link with open learners, but was not formally an example of it.
Can it be made into a formal example, and does this suggest good ways to structure equilibrium-approximating programs?

\item Can this idea be used to put game theory on a more realistic foundation with players who learn in a similar way to humans, without throwing away all of the benefits of having a theory at all?

\item Does this give us a way to formalise \emph{endogenous learning agents} in an economic model \cite[section 4]{blumensath13}?

\item It is well-known that Nash equilibria and many other notions of economic equilibrium can be intractable to calculate (the field of \emph{algorithmic game theory} studies this).
Can we use the `unreasonable effectiveness' of deep learning to cheat complexity theory and efficiently approximate equilibria in real examples?

\item GANs (generative adversarial networks) have achieved enormous success for machine learning tasks such as image generation.
A GAN consists of two deep neural networks, the \emph{generator} and the \emph{discriminator}, with a strong intuition that they are playing a zero-sum game against each other \cite{goodfellow_etal_generative_adversarial_nets,oliehoek_etal_gangs}.
Can we formalise this intuition by embedding the two neural networks into a game-theoretic situation that is itself not a neural network?
(Note that \emph{substituting into a game-theoretic situation} is precisely the sort of thing that can be made precise using open games.)

\item Can we systematically design more general GAN-like machine learning systems, for example having two networks playing a non-zero-sum game, or having more than two networks interacting?
Can analysis techniques from game theory help guide us to designs that have the properties we want?

\item By designing games whose players are learning agents implementing using neural networks, can we achieve a `best of all words' hybrid theory combining game theory, multiagent systems and machine learning?

\item Finally, a purely theoretical problem: to simplify and modularise the proof given here.
Open games can be factorised in terms of lenses \cite{hedges_morphisms_open_games}, and a similar factorisation is possible for open learners (work in progress by Brendan Fong and Mike Johnson).
However the functor $F$ does not respect these two factorisations.
As a result the author was unable to find a good way to modularise the proofs given in this paper, and instead presented them monolithically and ad-hoc, which is unsatisfying.

\end{enumerate}

\bibliographystyle{alpha}
\bibliography{\string~/Dropbox/Work/refs}

\end{document}